\documentclass[12pt]{amsart}
\setlength{\textwidth}{6.4in}
\setlength{\evensidemargin}{0.15in}
\setlength{\oddsidemargin}{0.15in}
\setlength{\textheight}{8.7in}



\usepackage{epsfig}  		



\newtheorem{thm}{Theorem}[section]
\newtheorem{prop}[thm]{Proposition}

\theoremstyle{definition}
\newtheorem{defn}[thm]{Definition}
\newtheorem{ex}[thm]{Example}

\newtheorem{remark}[thm]{Remark}

\numberwithin{equation}{section}


\newcommand{\R}{\mathbb{R}}  
\newcommand{\B}{\mathcal{B}}
\newcommand{\N}{\mathbb{N}}
\newcommand{\Z}{\mathbb{Z}}

\begin{document}
\title{On The Intersection Algebra Of Principal Ideals}

\author{Sara Malec}
\address{Department of Mathematics, University of the Pacific, 
Stockton, CA 95207}
\email{smalec@pacific.edu}

\begin{abstract}
We study the finite generation of the intersection algebra of two principal ideals $I$ and $J$ in a unique factorization domain $R$. We provide an algorithm that produces a list of generators of this algebra over $R$. In the special case that $R$ is a polynomial ring, this algorithm has been implemented in the commutative algebra software system Macaulay2. A new class of algebras, called fan algebras, is introduced.
\end{abstract}

 \maketitle

\section{Introduction}
In this paper, we study the intersection of powers of two ideals in a commutative Noetherian ring. This is achieved by looking at the structure called the intersection algebra, a recent concept, which is associated to the two ideals.\\

The purpose of the paper is to study the finite generation of this algebra, and to show that it holds in the particular important case of principal ideals in a unique factorization domain (UFD). 
In the general case, not much is known about the intersection algebra, and there are many questions that can be asked. Various aspects of the intersection algebra have been studied by J. B. Fields in \cite{Fieldsthesis, Fields}. There, he proved several interesting things, including the finite generation of the intersection algebra of two monomial ideals in the power series ring over a field. He also studied the relationship between the finite generation of the intersection algebra and the polynomial behavior of a certain function involving lengths of Tors. It is interesting to note that this algebra is not always finitely generated, as shown by Fields. The finite generation of the intersection algebra has also appeared in the work of Ciuperc{\u{a}}, Enescu, and Spiroff  in \cite{Ciuperca} in the context of asymptotic growth powers of ideals.\\

We will start with the definition of the intersection algebra. Throughout this paper, $R$ will be a commutative Noetherian ring.\\

\begin{defn}Let $R$ be a ring with two ideals $I$ and $J$. Then the \emph{intersection algebra of $I$ and $J$} is $\mathcal{B}=\bigoplus_{r,s \in \mathbb{N}}I^r\cap J^s$. If we introduce two indexing variables $u$ and $v$, then $\mathcal{B}_R(I,J)=\sum_{r,s \in \mathbb{N}}I^r\cap J^su^rv^s\subseteq R[u,v]$. When $R,I$ and $J$ are clear from context, we will simply denote this as $\mathcal{B}$. We will often think of $\mathcal{B}$ as a subring of $R[u,v]$, where there is a natural $\mathbb{N}^2$-grading on monomials $b \in \mathcal{B}$ given by $\textrm{deg}(b)=(r,s)\in \mathbb{N}^2$. If this algebra is finitely generated over $R$, we say that $I$ and $J$ have \emph{finite intersection algebra}. 
\end{defn}


\begin{ex} If $R=k[x,y]$, $I=(x^2y)$ and $J=(xy^3)$, then an example of an element in $\mathcal{B}$ is $2+3x^5y^9u^2v^3+x^{10}y^{15}u^4v$ , since $2 \in I^0 \cap J^0=k$, $x^5y^9u^2v^3 \in I^2 \cap J^3u^2v^3=(x^4y^2) \cap (x^3y^9)u^2v^3=(x^4y^9)u^2v^3$, and $x^{10}y^{15}u^4v \in I^4\cap Ju^4v=(x^8y^4) \cap (xy^3)u^4v=(x^8y^4)u^4v$.
\end{ex}

We remark that the intersection algebra has connections to the double Rees algebra $R[Iu,Jv]$, although in practice they can be very different. This relationship is significant due to the importance of the Rees algebra, but the two objects behave differently. The source for the different behaviour lies in the obvious fact that the intersection $I^r \cap J^s$ is harder to predict than $I^rJ^s$ as $r$ and $s$ vary. These differences in behavior are of great interest and should be further explored.\\

In this paper, we produce an algorithm that gives a set of generators for the algebra for two principal ideals in a UFD, and we implement the algorithm in Macaulay2 for the case of principal monomial ideals in a polynomial ring over a field. \\

The finite generation of an $\N^2$-graded algebra, such as the intersection algebra, can be rephrased in the following way:

\begin{prop}Let $\B=\bigoplus_{r,s \in \N}\B_{r,s}$ be an $\N^2$-graded algebra over a ring $R$. Then $\B$ is finitely generated if and only if there exists an $N \in \mathbb{N}$ such that for every $r,s \in \mathbb{N}$ with $r,s>N$,

 \[\mathcal{B}_{r,s} = \sum_{\mathcal{I}}\prod_{j,k=0}^N\mathcal{B}_{j,k}^{i_{jk}},\]
where $\mathcal{I}$ is the set of all $N \times N$ matrices $I=(i_{jk})$ such that \[r=\sum_{j,k=0}^Nji_{jk}\textrm{ and } s=\sum_{j,k=0}^Nki_{jk}.\] \end{prop}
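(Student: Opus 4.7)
The plan is to establish the equivalence by translating finite generation of $\mathcal{B}$ over $R$ into the existence of a uniform bidegree bound on a homogeneous generating set, and then reading off the decomposition of each large graded piece as a sum of products of small pieces.

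For the forward direction, assume $\mathcal{B}$ is finitely generated over $R$. Since $\mathcal{B}$ is $\mathbb{N}^2$-graded, any finite generating set may be refined to a homogeneous one $\{g_1,\ldots,g_m\}$, with bidegrees $(a_i,b_i)$, and I set $N=\max_i\max(a_i,b_i)$. An arbitrary $b\in\mathcal{B}_{r,s}$ is then an $R$-linear combination of monomials $g_1^{n_1}\cdots g_m^{n_m}$, and the $\mathbb{N}^2$-grading forces only those monomials satisfying $\sum n_i a_i=r$ and $\sum n_i b_i=s$ to contribute. Collecting generators of equal bidegree via $i_{jk}=\sum_{(a_\ell,b_\ell)=(j,k)}n_\ell$ produces a matrix $(i_{jk})\in\mathcal{I}$, and the corresponding monomial lies in $\prod_{j,k=0}^N\mathcal{B}_{j,k}^{i_{jk}}$; summing over these monomials yields the required equality for every $(r,s)$, and in particular for $r,s>N$.

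For the backward direction, fix $N$ so the equality holds for all $r,s>N$. Each graded piece $\mathcal{B}_{j,k}=(I^j\cap J^k)u^jv^k$ is a finitely generated $R$-module since $R$ is Noetherian, so I choose finite $R$-generating sets $G_{j,k}$ of $\mathcal{B}_{j,k}$ for every pair $0\le j,k\le N$, and let $A$ be the $R$-subalgebra of $\mathcal{B}$ generated by the finite set $G=\bigcup_{j,k\le N}G_{j,k}$. I verify $A=\mathcal{B}$ piece by piece: if $r,s\le N$, then $\mathcal{B}_{r,s}$ is $R$-spanned by $G_{r,s}\subseteq A$; if $r,s>N$, the hypothesis expresses $\mathcal{B}_{r,s}$ as a sum of products of $\mathcal{B}_{j,k}$'s with $j,k\le N$, all of which already lie in $A$. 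The mixed boundary pieces where exactly one of $r,s$ exceeds $N$ are reduced to the interior case by multiplying with a suitable factor from $G$, bringing both coordinates above $N$, and then running the same degree-accounting argument.

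The main technical obstacle I anticipate is the bookkeeping in the forward direction, specifically matching the multiindex $(i_{jk})$ extracted from a product of chosen generators against the index set $\mathcal{I}$ and checking that the resulting expression represents \emph{every} element of $\mathcal{B}_{r,s}$, not merely those arising from products of the fixed generators. The remainder is a standard finite-generation-versus-bounded-degree-of-generators argument for $\mathbb{N}^2$-graded algebras, and the Noetherian hypothesis on $R$ is doing the work of ensuring each low-degree piece is itself finitely generated as an $R$-module.
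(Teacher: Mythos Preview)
Your forward direction is correct and matches the paper's argument: take homogeneous generators, let $N$ bound their bidegrees, and verify both inclusions of the displayed equality by degree-matching. The paper's backward direction is a single clause (``the other direction is obvious''), so you are already attempting more than the paper does.

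However, your treatment of the mixed boundary strips---one of $r,s$ exceeding $N$, the other not---has a genuine gap. Multiplying $x\in\mathcal{B}_{r,s}$ by some $g\in G$ to push both coordinates above $N$ shows only that $xg\in A$; you cannot cancel $g$ to conclude $x\in A$. In fact the backward implication is false for arbitrary $\mathbb{N}^2$-graded $R$-algebras as literally stated: over a field $k$, take $\mathcal{B}=k[x_1,x_2,\ldots]$ with $\deg x_i=(i,0)$. Then $\mathcal{B}_{r,s}=0$ whenever $s>0$, so for any $N\ge 1$ and any $r,s>N$ both sides of the displayed equality vanish (every admissible product on the right contains a factor $\mathcal{B}_{j,k}$ with $k\ge 1$, which is zero), yet $\mathcal{B}$ is not finitely generated. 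The hypothesis simply does not see the infinitely many pieces with $s\le N$. The natural repair is to require the displayed equality for all $(r,s)$, or at least for all $(r,s)$ with $\max(r,s)>N$, together with your Noetherian observation that each $\mathcal{B}_{j,k}$ with $j,k\le N$ is a finitely generated $R$-module; under that strengthened hypothesis your argument goes through with no boundary trick needed. You were right to flag this case---the paper glosses over it entirely---but the multiplication idea cannot close the gap.
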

 
\begin{proof}If $\B$ is finitely generated over $R$, there exists an $N \in \mathbb{N}$ such that all the generators for $\B$ come from components $\B_{r,s}$ with $r,s \le N.$ Let $b \in B_{r,s}$ with $r,s>N$. So $b$ can be written as a polynomial in the generators with coefficients in $R$, in other words 
\[b\in\sum_{\textrm{finite}} \prod_{j,k=0}^N\B_{j,k}^{i_{jk}}.\] 
Also, the right hand side must be in the $(r,s)$-graded component of $\B$: i.e. the sum can only run over all $j,k$ such that $r=\sum ji_{jk}$ and $s=\sum ki_{jk}$ and $j$ and $k$ both go from 0 to $N$. So $b \in \B_{r,s}$ must be in the right hand side above as claimed. For the other inclusion, note again that the degrees match up:
\[\prod_{j,k=0}^N\mathcal{B}_{j,k}^{i_{jk}} \subset \B_{r,s}\]
 as long as $r=\sum ji_{jk}$ and $s=\sum ki_{jk}$, so obviously sums of such expressions are also included in $\B_{r,s}$. The other direction is obvious.
\end{proof}

\section{The UFD Case}
\begin{thm}If $R$ is a UFD and $I$ and $J$ are principal ideals, then $\mathcal{B}$ is finitely generated as an algebra over $R$.
\end{thm}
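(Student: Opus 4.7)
The starting point is the observation that in a UFD the intersection of two principal ideals is principal: writing $I = (f)$, $J = (g)$ and factoring $f = \prod_{i=1}^n p_i^{a_i}$ and $g = \prod_{i=1}^n p_i^{b_i}$ over a common list of nonassociate primes, one has $I^r \cap J^s = (L_{r,s})$ where $L_{r,s} = \prod_{i=1}^n p_i^{\max(r a_i,\,s b_i)}$. Consequently $\mathcal{B}$ is generated as an $R$-algebra by the elements $L_{r,s}\, u^r v^s$ with $(r,s) \in \mathbb{N}^2$, and the task reduces to extracting a finite subfamily from which every other such element is an $R$-polynomial.

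My plan is to exploit the piecewise-linearity of the exponents $\max(r a_i, s b_i)$. For each $i$, the ray $r a_i = s b_i$ splits the first quadrant into two closed half-planes on which this maximum is given by a single $\mathbb{Z}$-linear expression in $(r,s)$. Taking the common refinement of all $n$ such rays partitions $\mathbb{R}_{\geq 0}^2$ into finitely many closed rational polyhedral cones $C_1, \ldots, C_m$; on each $C_\ell$ every exponent $\max(r a_i, s b_i)$ is a single $\mathbb{Z}$-linear function of $(r,s)$. Hence the element $L_{r,s}\, u^r v^s$ becomes a monomial in $p_1, \ldots, p_n, u, v$ whose exponent vector depends $\mathbb{Z}$-linearly on $(r,s)$. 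Because $C_\ell$ is convex, the sum of two lattice points of $C_\ell$ remains in $C_\ell$; consequently $L_{r_1,s_1} L_{r_2,s_2} = L_{r_1+r_2,\, s_1+s_2}$ for all $(r_j, s_j) \in C_\ell \cap \mathbb{N}^2$, so $(r,s) \mapsto L_{r,s}\, u^r v^s$ is a monoid homomorphism on $C_\ell \cap \mathbb{N}^2$.

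Next I would invoke Gordan's lemma: the semigroup $C_\ell \cap \mathbb{N}^2$ is finitely generated for each $\ell$, and its finitely many generators transport under the monoid homomorphism above to finitely many generators of the subalgebra of $\mathcal{B}$ indexed by $C_\ell$. Because $\mathbb{N}^2$ is the union of the $m$ sets $C_\ell \cap \mathbb{N}^2$, combining these finite sets gives a finite $R$-algebra generating set for all of $\mathcal{B}$.

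The main obstacle is essentially bookkeeping: ordering the rays $r a_i = s b_i$ by slope, handling degenerate cases such as $a_i = 0$ or $b_i = 0$ by assigning them to the extreme sectors, and verifying that the linear exponent formulas on neighbouring cones agree on their shared boundary rays so that the partition genuinely behaves like a semigroup decomposition. Once the decomposition is in place the multiplicativity of $L_{r,s}$ inside a single cone is automatic, and Gordan's lemma completes the proof. I also expect the explicit cone generators produced by this argument to be the basis for the constructive algorithm advertised in the introduction.
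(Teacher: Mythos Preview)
Your proposal is correct and follows essentially the same route as the paper: factor the generators over common primes, decompose $\mathbb{R}_{\geq 0}^2$ into the rational cones cut out by the rays $r a_i = s b_i$ so that each exponent $\max(r a_i, s b_i)$ becomes linear on a cone, and then apply Gordan's lemma to obtain a finite (Hilbert-basis) generating set for each $C_\ell \cap \mathbb{Z}^2$, whose images under the monoid homomorphism $(r,s)\mapsto L_{r,s}u^r v^s$ generate $\mathcal{B}$. The paper carries this out with the same fan and the same appeal to Gordan's lemma, writing out the explicit generator $p_1^{a_1 r}\cdots p_i^{a_i r}p_{i+1}^{b_{i+1}s}\cdots p_n^{b_n s}u^r v^s$ on the $i$th cone and verifying the factorization by hand; your anticipated ``bookkeeping'' (fan-ordering the slopes, handling zero exponents via the extreme rays, agreement on shared faces) is exactly what the paper's Definition~2.7 packages, and your expectation that the cone generators feed the algorithm is borne out.
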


The proof for the finite generation of $\mathcal{B}$ relies heavily on semigroup theory. The following definitions and theorems provide the necessary framework for the proof.\\

\begin{defn} A \emph{semigroup} is a set together with a closed associative binary operation. A semigroup generalizes a monoid in that it need not contain an identity element. We call a semigroup an \emph{affine semigroup} if it isomorphic to a subgroup of $\mathbb{Z}^d$ for some $d$. An affine semigroup is called \emph{pointed} if it contains the identity, which is the only invertible element of the semigroup.\end{defn}

 \begin{defn}A \emph{polyhedral cone} in $\mathbb{R}^d$ is the intersection of finitely many closed linear half-spaces in $\mathbb{R}^d$, each of whose bounding hyperplanes contains the origin. Every polyhedral cone $C$ is finitely generated, i.e. there exist $\boldsymbol{c_1}, \ldots, \boldsymbol{c_r} \in \mathbb{R}^d$ with \[C=\{\lambda_1\boldsymbol{c_1}+\cdots+\lambda_r\boldsymbol{c_r}|\lambda_1, \ldots, \lambda_r \in \mathbb{R}_{\geq 0}\}.\] We call the cone $C$ \emph{rational} if $\boldsymbol{c_1}, \ldots, \boldsymbol{c_r}$ can be chosen to have rational coordinates, and $C$ is \emph{pointed} if $C \cap(-C)=\{\boldsymbol{0}\}$.\end{defn}
 
We present the following two important results without proof: the full proofs are contained in \cite{Miller} as Theorem 7.16 and 7.15, respectively.

 \begin{thm} (Gordan's Lemma) If $C$ is a rational cone in $\mathbb{R}^d$, then $C \cap A$ is an affine semigroup for any subgroup $A$ of $\mathbb{Z}^d$.\end{thm}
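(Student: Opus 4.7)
The plan is to establish Gordan's Lemma via the classical fundamental-parallelepiped argument, the content being that $C \cap A$ is finitely generated as a subsemigroup of $\mathbb{Z}^d$. First I would reduce to the case where $A$ has full rank in the ambient space. Set $V = A \otimes_{\mathbb{Z}} \mathbb{R} \subseteq \mathbb{R}^d$; this is a rational linear subspace because $A$ sits inside $\mathbb{Z}^d$. I would replace $C$ by $C \cap V$, observing that the intersection of a rational polyhedral cone with a rational subspace is again a rational polyhedral cone (its defining half-spaces remain rational, and rational generators can be read off the resulting inequality description). Choosing a $\mathbb{Z}$-basis of $A$ identifies $V$ with $\mathbb{R}^k$ and $A$ with $\mathbb{Z}^k$, so the task reduces to: for a rational cone $C \subseteq \mathbb{R}^k$, show $C \cap \mathbb{Z}^k$ is a finitely generated semigroup.

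For this reduced problem, I would pick generators $c_1, \ldots, c_r$ of $C$ and clear denominators so that $c_i \in \mathbb{Z}^k$. I would then form the fundamental box
\[ K = \left\{ \sum_{i=1}^r \mu_i c_i : 0 \leq \mu_i \leq 1 \text{ for all } i \right\}, \]
which is compact, so $K \cap \mathbb{Z}^k$ is finite. The claim is that the finite set $S = (K \cap \mathbb{Z}^k) \cup \{c_1, \ldots, c_r\}$ generates $C \cap \mathbb{Z}^k$ as a semigroup. To check this, take any $x \in C \cap \mathbb{Z}^k$, write $x = \sum \lambda_i c_i$ with $\lambda_i \geq 0$, and split $\lambda_i = \lfloor \lambda_i \rfloor + \{\lambda_i\}$ to obtain
\[ x = \sum_{i=1}^r \lfloor \lambda_i \rfloor c_i + \sum_{i=1}^r \{\lambda_i\} c_i. \]
The first sum is a non-negative integer combination of the $c_i$, while the second sum lies in $K$ by construction and equals $x$ minus an integer vector, hence belongs to $K \cap \mathbb{Z}^k$. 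This exhibits $x$ as a non-negative integer combination of elements of $S$, as required.

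The hard part will be the initial reduction step: one must verify carefully that intersecting a rational cone with a rational subspace preserves rationality (most cleanly seen through the half-space description, where both are cut out by rational linear inequalities) and that the chosen basis of $A$ lets us scale generators of the restricted cone into $\mathbb{Z}^k \cong A$. Once this setup is in place, the pigeonhole argument using floors and fractional parts is completely routine; the essential ingredient is the topological fact that a compact subset of Euclidean space meets the integer lattice in only finitely many points, so only finitely many ``remainders'' can ever appear.
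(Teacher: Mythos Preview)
The paper does not prove this statement; it explicitly defers to Miller--Sturmfels, Theorem~7.16. Your argument is the standard fundamental-parallelepiped proof and is correct, including the reduction from an arbitrary subgroup $A\le\mathbb{Z}^d$ to the full-rank case: since a $\mathbb{Z}$-basis of $A$ has integer coordinates in $\mathbb{R}^d$, pulling back the rational half-space description of $C\cap V$ along the linear embedding $\mathbb{R}^k\hookrightarrow\mathbb{R}^d$ yields rational inequalities in the new coordinates, so the restricted cone is again rational and its generators can be scaled into $\mathbb{Z}^k\cong A$. This is essentially the argument in the cited reference (carried out there for $A=\mathbb{Z}^d$), so there is nothing to correct or contrast.
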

 
  \begin{thm} Any pointed affine semigroup $Q$ has a unique finite minimal generating set $\mathcal{H}_Q$.\end{thm}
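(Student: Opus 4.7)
My plan is to define
\[\mathcal{H}_Q := (Q\setminus\{0\}) \setminus \bigl((Q\setminus\{0\}) + (Q\setminus\{0\})\bigr),\]
the set of \emph{indecomposable} non-identity elements of $Q$, and then to verify three claims: (i) every generating set of $Q$ contains $\mathcal{H}_Q$; (ii) $\mathcal{H}_Q$ is finite; and (iii) $\mathcal{H}_Q$ generates $Q$. Granting these, $\mathcal{H}_Q$ is a generating set by (iii), it is minimal by (i), and any other minimal generating set must simultaneously contain $\mathcal{H}_Q$ (by (i)) and be contained in it (by its own minimality, since $\mathcal{H}_Q$ also generates), forcing equality. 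This yields the uniqueness claim.

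Claim (i) is essentially by definition: if $G$ generates $Q$ and $h \in \mathcal{H}_Q$, write $h$ as a sum of nonzero elements of $G$; because $h$ cannot be split into two nonzero summands in $Q$, this sum must consist of a single term, so $h \in G$. Claim (ii) follows from (i) by taking $G$ to be any finite generating set of $Q$ (which exists because $Q$ is affine), whence $\mathcal{H}_Q \subseteq G$ is finite.

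The substantive step is claim (iii), which I would prove by strong induction on $\varphi(q)$, where $\varphi : \mathbb{Z}^d \to \mathbb{Z}$ is a linear functional strictly positive on $Q \setminus \{0\}$. Such a $\varphi$ exists precisely because $Q$ is pointed: the cone $C := \mathbb{R}_{\geq 0} Q$ is rational (being generated by finitely many integer vectors) and pointed (since $Q$ is), so the interior of its dual cone is nonempty, and any rational interior point scaled to clear denominators furnishes the desired $\varphi$. For the induction, if $q \in \mathcal{H}_Q$ there is nothing to show; otherwise write $q = q_1 + q_2$ with $q_1, q_2 \in Q \setminus \{0\}$, whence $\varphi(q_i) < \varphi(q)$ since $\varphi(q_1) + \varphi(q_2) = \varphi(q)$ with both summands in $\mathbb{Z}_{> 0}$, so induction applies.

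The main obstacle I anticipate is the production of the functional $\varphi$. Verifying that $C$ inherits pointedness from $Q$ is straightforward, but the separation step---that a pointed rational polyhedral cone admits a \emph{rational} linear functional strictly positive on its nonzero points---requires some convex-geometric care (non-emptiness of the relative interior of the dual cone, plus a rationality argument). Once $\varphi$ is in hand, the rest of the argument is essentially formal.
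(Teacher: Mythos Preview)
The paper does not actually prove this theorem: immediately before the statement it says the result is presented ``without proof'' and refers the reader to \cite{Miller}, Theorem~7.15, for the argument. So there is no proof in the paper to compare your attempt against.

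For what it is worth, your proposal is correct and is essentially the standard proof one finds in the cited reference: define $\mathcal{H}_Q$ as the set of irreducible (indecomposable) nonzero elements, observe it lies in every generating set and is therefore finite, and show it generates by inducting on the value of a linear functional that is strictly positive on $Q\setminus\{0\}$. Your assessment that producing this functional from pointedness via cone duality is the only step requiring genuine input is accurate. One small point you might make explicit in claim~(i): when you split $h=g_1+(g_2+\cdots+g_k)$ you need $g_2+\cdots+g_k\neq 0$, and this is where pointedness (no nonzero element of $Q$ has an additive inverse in $Q$) is used a second time.
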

 
 \begin{defn} Let $C$ be a rational pointed cone in $\mathbb{R}^d$, and let $Q=C \cap \mathbb{Z}^d$. Then $\mathcal{H}_Q$ is called the \emph{Hilbert Basis of the cone $C$}.\end{defn}
 
We will use these results to provide a list of generators for $\mathcal{B}$. Notice that for any two strings of numbers
\[\mathbf a=\{a_1, \ldots, a_n\}, \mathbf b=\{b_1, \ldots, b_n\}\textrm{ with }a_i,b_i \in \mathbb{N},\]
we can associate to them a fan of pointed, rational cones in $\mathbb{N}^2$. 

\begin{defn} We will call two such strings of numbers \emph{fan ordered} if 
\[\frac{a_i}{b_i} \geq \frac{a_{i+1}}{b_{i+1}}\textrm{ for all } i=1, \ldots, n.\] 
Assume $\mathbf{a}$ and $\mathbf{b}$ are fan ordered. Additionally, let $a_{n+1}=b_0=0$ and $a_0=b_{n+1}=1$. Then for all $i=0, \ldots, n$, let \[C_i=\{\lambda_1 (b_i,a_i )+\lambda_2(b_{i+1},a_{i+1})|\lambda_1, \lambda_2 \in \mathbb{R}_{\geq 0}\}.\] Let $\Sigma_{\mathbf{a}, \mathbf{b}}$ be the fan formed by these cones and their faces, and call it the \emph{fan of $\mathbf{a}$ and $\mathbf{b}$ in $\N^2$}. Hence \[\Sigma_{\mathbf{a},\mathbf{b}}=\{C_i|i=0, \ldots, n\}.\]\end{defn}
\begin{center}
\includegraphics{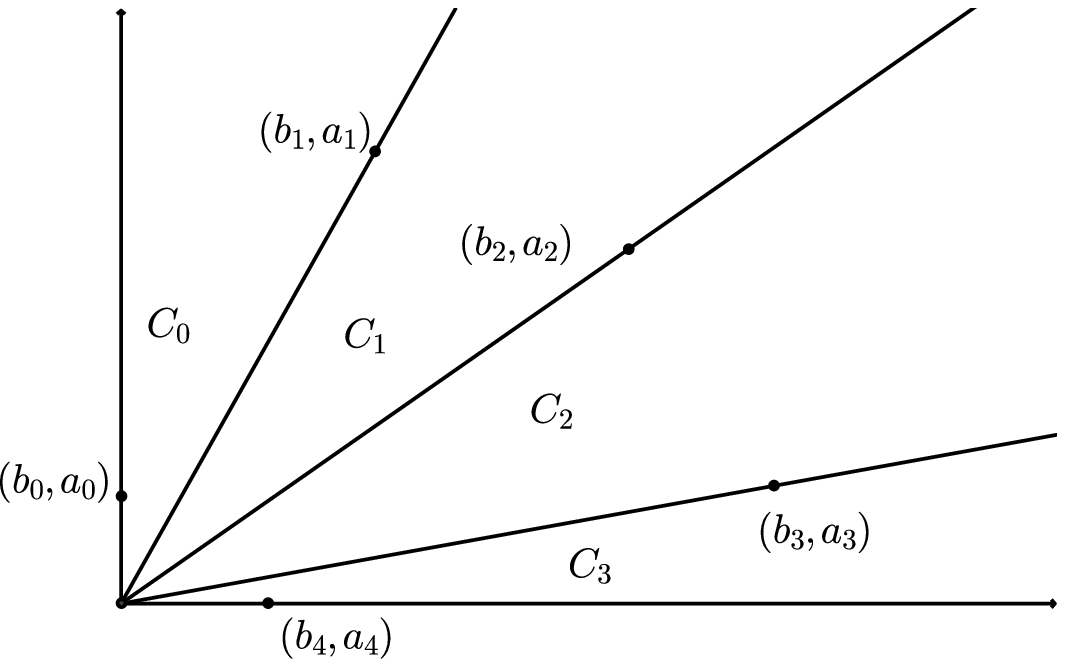}
\end{center}
Then, since each $C_i$ is a pointed rational cone, $Q_i=C_i \cap \mathbb{Z}^2$ has a Hilbert Basis, say 
\[\mathcal{H}_{Q_i}=\{(r_{i1},s_{i1}), \ldots, (r_{in_i},s_{in_i})\}.\]

Note that any $\Sigma_{\mathbf{a}, \mathbf{b}}$ partitions all of the first quadrant of $\R^2$ into cones, so the collection $\{Q_i|i=0, \ldots, n+1\}$ partitions all of $\N^2$ as well, so for any $(r,s) \in \N^2$, $(r,s) \in Q_i$ for some $i=0, \ldots, n+1$.\\

In this paper, we are studying the intersection algebra when $I$ and $J$ are principal, so the order of the exponents in their exponent vectors does not matter. In general, for any two strings of numbers $\mathbf{a}$ and $\mathbf{b}$, there is a unique way to rearrange them so that they are fan ordered. So a unique fan can be associated to any two vectors. For the purposes of this paper, we will assume without loss of generality that the exponent vectors are fan ordered.\\

 \begin{thm} \label{UFD} Let $R$ be a UFD with principal ideals $I=(p_1^{a_1}\cdots p_n^{a_n})$ and  $J=(p_1^{b_1}\cdots p_n^{b_n})$, where $p_i, i=0, \ldots, n$, and let $\Sigma_{\mathbf{a},\mathbf{b}}$ be the fan associated to $\mathbf{a}=(a_1, \ldots, a_n)$ and $\mathbf{b}=(b_1, \ldots, b_n)$. Then $\mathcal{B}$ is generated over $R$ by the set \[\{p_1^{a_1r_{ij}}\cdots p_i^{a_ir_{ij}}p_{i+1}^{b_{i+1}s_{ij}} \cdots p_n^{b_ns_{ij}}u^{r_{ij}}v^{s_{ij}}| i=0, \ldots, n, j=1, \ldots, n_i\},\] where $(r_{ij},s_{ij})$ run over the Hilbert basis for each $Q_i=C_i \cap \Z^2$ for every $C_i \in \Sigma_{\mathbf{a},\mathbf{b}}$.
 \end{thm}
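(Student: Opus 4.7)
The key observation underpinning the proof is that in a UFD, the intersection of two principal ideals is principal, generated by the componentwise maximum of the exponent vectors. Thus
\[I^r \cap J^s = \Bigl(\prod_{k=1}^n p_k^{\max(a_k r,\, b_k s)}\Bigr),\]
so that each graded piece $\mathcal{B}_{r,s}$ is the free $R$-module on a single monomial $\prod_k p_k^{\max(a_k r, b_k s)} u^r v^s$. Hence producing generators for $\mathcal{B}$ amounts to expressing this distinguished monomial, for every $(r,s) \in \N^2$, as a product of monomials in the proposed list.

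The fan $\Sigma_{\mathbf{a},\mathbf{b}}$ is introduced precisely to linearize the maxima. The plan is to show that if $(r,s) \in Q_i$, written as a nonnegative $\R$-combination $\lambda_1(b_i,a_i) + \lambda_2(b_{i+1},a_{i+1})$, then its slope $s/r$ lies in the interval $[a_{i+1}/b_{i+1},\, a_i/b_i]$. Combined with the fan ordering $a_1/b_1 \geq \cdots \geq a_n/b_n$ (and the artificial endpoints $a_0/b_0 = \infty$, $a_{n+1}/b_{n+1}=0$), this forces $a_k r \geq b_k s$ for all $k \leq i$ and $a_k r \leq b_k s$ for all $k \geq i+1$. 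Consequently
\[\mathcal{B}_{r,s} = \bigl(p_1^{a_1 r}\cdots p_i^{a_i r}\, p_{i+1}^{b_{i+1} s}\cdots p_n^{b_n s}\bigr)\, u^r v^s,\]
where now all exponents are \emph{linear} functions of $(r,s)$ throughout the region $Q_i$.

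Now I would invoke Gordan's lemma: each $Q_i = C_i \cap \Z^2$ is an affine semigroup finitely generated by its Hilbert basis $\mathcal{H}_{Q_i} = \{(r_{ij},s_{ij})\}$. So any $(r,s) \in Q_i$ can be written as $(r,s) = \sum_j \lambda_j (r_{ij},s_{ij})$ with $\lambda_j \in \N$. Because the exponent of every $p_k$ in the formula above is linear in $(r,s)$ as long as $(r,s)$ remains in $Q_i$, multiplying together the proposed generators $p_1^{a_1 r_{ij}}\cdots p_i^{a_i r_{ij}} p_{i+1}^{b_{i+1} s_{ij}}\cdots p_n^{b_n s_{ij}} u^{r_{ij}} v^{s_{ij}}$ raised to the $\lambda_j$-th power yields exactly the distinguished generator of $\mathcal{B}_{r,s}$. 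Since $\N^2$ is partitioned by the $Q_i$, every graded piece of $\mathcal{B}$ is then accounted for by $R$-multiples of products of the finite set listed in the theorem, giving finite generation.

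The main obstacle is the bookkeeping of the second step: correctly translating the geometric slope condition from membership in $C_i$ into the arithmetic statement about $\max(a_k r, b_k s)$, and verifying that the extremal cones $C_0$ and $C_n$ (involving the artificial rays $(0,1)$ and $(1,0)$) yield the expected degenerate products. A minor point is consistency on shared rays between adjacent cones, where both expressions for $\mathcal{B}_{r,s}$ must agree; this follows because on the ray through $(b_i,a_i)$ one has the equality $a_i r = b_i s$, so the two descriptions coincide.
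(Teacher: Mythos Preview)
Your proposal is correct and follows essentially the same route as the paper: identify the cone $Q_i$ containing $(r,s)$, use the fan ordering to resolve each $\max(a_k r, b_k s)$ and thereby compute $I^r\cap J^s$ as a single principal ideal with exponents linear in $(r,s)$ on $Q_i$, then decompose $(r,s)$ along the Hilbert basis of $Q_i$ and factor the generating monomial accordingly. The paper's argument is slightly terser (it does not explicitly state the $\max$ formula or discuss the boundary rays and extremal cones), but the logic is the same.
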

 
  \begin{proof} Since $\mathcal{B}$ has a natural $\mathbb{N}^2$ grading, it is enough to consider only homogeneous monomials $b \in \mathcal{B}$ with $\textrm{deg}(b)=(r,s)$. Then $(r,s) \in Q_i=C_i \cap \Z^2$ for some $C_i \in \Sigma_{\mathbf{a},\mathbf{b}}$. In other words, $r,s \in \mathbb{N}^2$ and \[\frac{a_i}{b_i} \geq \frac{s}{r} \geq \frac{a_{i+1}}{b_{i+1}}.\] So $a_ir \geq b_is  $, and by the ordering on the $a_i$ and the $b_i$, $ a_jr\geq b_js$ for all $j<i$. Also, $a_{i+1}r\leq b_{i+1}s$, and again by the ordering, $a_jr\leq b_js$ for all $j>i$. So
\begin{align*}
b \in I^r \cap J^s u^rv^s &= (p_1^{a_1}\cdots p_n^{a_n})^r \cap (p_1^{b_1} \cdots p_n^{b_n})^su^rv^s\\
&=(p_1^{a_1r}\cdots p_i^{a_ir}\cdot p_{i+1}^{b_{i+1}s}\cdots p_n^{b_ns})u^rv^s
\end{align*}

So $b=f\cdot p_1^{a_1r}\cdots p_i^{a_ir}\cdot p_{i+1}^{b_{i+1}s}\cdot p_n^{b_ns}u^rv^s$ for some monomial $f \in R$.\\

Since $(r,s) \in Q_i$, the pair has a decomposition into a sum of Hilbert basis elements. So we have $(r,s)=\sum_{j=1}^{n_i} m_j(r_{ij},s_{ij})$ with $m_j \in \mathbb{N}$, and $r=\sum_{j=1}^{n_i}m_jr_{ij}$, $s=\sum_{j=1}^{n_i}m_js_{ij}$. Therefore
\begin{align*}b=&f(p_1^{a_1r} \cdots p_i^{a_ir}p_{i+1}^{b_{i+1}s}\cdots p_n^{b_ns}u^rv^s)\\
=&f\prod_{j=1}^{n_i}p_1^{m_j(a_1r_{ij})}\cdots p_i^{m_j(a_ir_{ij})}p_{i+1}^{m_j(b_{i+1}s_{ij})}\cdots p_n^{m_j(b_ns_{ij})}u^{m_j(r_{ij})}v^{m_j(s_{ij})}\\
=&f\prod_{j=1}^{n_i}(p_1^{a_1r_{ij}}\cdots p_i^{a_ir_{ij}}p_{i+1}^{b_{i+1}s_{ij}}\cdots p_n^{b_ns_{ij}}u^{r_{ij}}v^{s_{ij}})^{m_j}
\end{align*}

So $b$ is generated over $R$ by the given finite set as claimed.
\end{proof}

\begin{remark} This theorem extends and refines the main result in \cite{mythesis}
\end{remark}

\begin{remark}For any two ideals $I$ and $J$ in $R$ with $J \subset \sqrt{I}$, where $I$ is not nilpotent and $\cap_k I^k =(0)$, define $v_I(J,m)$ to be the largest integer $n$ such that $J^m \subseteq I^n$ and $w_J(I,n)$ to be the smallest $m$ such that $J^m \subseteq I^n$. The two sequences $\{v_I(J,m)/m\}_m$ and $\{w_J(I,n),n\}_n$ have limits $l_I(J)$ and $L_J(I)$, respectively. See \cite{Nagata, Samuel} for related work. 

Given two principal ideals $I$ and $J$ in a UFD $R$ whose radicals are equal (i.e. the factorizations of their generators use the same irreducible elements), our procedure to determine generators also shows that the vectors $(b_1,a_1)$ and $(b_n,a_n)$ are related to the pairs of points $(r,s)$ where $I^r \subseteq J^s$ (respectively $J^s \subseteq I^r$): notice that $C_0$ is the cone between the $y$-axis and the line through the origin with slope $a_0/b_0$, and for all $(r,s) \in C_0\cap \N^2$, $I^r \subseteq J^s$. Therefore  $l_J(I)=a_0/b_0$. Similarly, $C_n$, the cone between the $x$-axis and the line through the origin with slope $a_n/b_n$, contains all $(r,s) \in \N^2$ where $J^s \subseteq I^r$, so $l_J(I)=a_n/b_n$. Then, since $l_I(J)L_J(I)=1$, this gives that $L_J(I)=a_1/b_1$ and $L_I(J)=b_n/a_n$ as well. This agrees with the observations of Samuel and Nagata as mentioned in \cite{Ciuperca}. \\
\end{remark}

\section{The Polynomial Case}

In this section, we will show that in the special case where $R$ is a polynomial ring in finitely many variables over a field, then the intersection algebra of two principal monomial ideals is a semigroup ring whose generators can be algorithmically computed.\\

\begin{defn} Let $k$ be a field. The \emph{semigroup ring} $k[Q]$ of a semigroup $Q$ is the $k$-algebra with $k$-basis $\{t^a | a \in Q\}$ and multiplication defined by $t^a \cdot t^b=t^{a+b}$.\end{defn}

Note that when $F=\{f_1, \ldots, f_q\}$ is a collection of monomials in $R$, $k[F]$ is equal to the semigroup ring $k[Q]$, where $Q=\mathbb{N}\textrm{log}(f_1)+\cdots + \mathbb{N}\textrm{log}(f_q)$ is the subsemigroup of $\mathbb{N}^q$ generated by $\textrm{log}(F)$. It is easy to see that multiplying monomials in the semigroup ring amounts to adding exponent vectors in the semigroup, as in the following example:

We can consider $\mathcal{B}$ both as an $R$-algebra and as a $k$-algebra, and it is important to keep in mind which structure one is considering when proving results. While there are important distinctions between the two, finite generation as an algebra over $R$ is equivalent to finite generation as an algebra over $k$.

\begin{thm} Let $R$ be a ring that is finitely generated as an algebra over a field $k$. Then $\mathcal{B}$ is finitely generated as an algebra over $R$ if and only if it is finitely generated as an algebra over $k$.\end{thm}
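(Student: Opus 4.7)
\medskip

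The plan is to prove this by a routine transitivity-of-finite-generation argument, using that both $k$ and $R$ sit inside $\mathcal{B}$ as subrings (with $k \subseteq R$ via the inclusion $k \hookrightarrow R$ and $R \subseteq \mathcal{B}$ via the degree $(0,0)$ component).

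For the forward implication, I assume $\mathcal{B}$ is finitely generated over $R$, say by elements $b_1,\ldots,b_n \in \mathcal{B}$, and $R$ is finitely generated over $k$, say by $r_1,\ldots,r_m \in R$. Then I would check directly that the combined finite set $\{r_1,\ldots,r_m,b_1,\ldots,b_n\}$ generates $\mathcal{B}$ as a $k$-algebra: any element of $\mathcal{B}$ can first be written as a polynomial in the $b_i$ with coefficients in $R$, and then each such $R$-coefficient can be rewritten as a polynomial in the $r_j$ with coefficients in $k$. Substituting yields a polynomial in $\{r_1,\ldots,r_m,b_1,\ldots,b_n\}$ with coefficients in $k$.

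For the reverse implication, I assume $\mathcal{B}$ is finitely generated as a $k$-algebra by elements $c_1,\ldots,c_N \in \mathcal{B}$. Since $k \subseteq R \subseteq \mathcal{B}$, every $k$-polynomial expression in the $c_i$ is automatically an $R$-polynomial expression in the $c_i$; hence the same finite set $\{c_1,\ldots,c_N\}$ generates $\mathcal{B}$ as an $R$-algebra.

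Neither direction presents a genuine obstacle — the statement is essentially transitivity of algebra-finite-generation together with the observation that enlarging the ring of scalars from $k$ to $R$ can only shrink the size of a generating set. The only thing I would be careful about is making explicit the embeddings $k \subseteq R \subseteq \mathcal{B}$ and noting that in the reverse direction one does not even need the hypothesis that $R$ is finitely generated over $k$, though it is given.
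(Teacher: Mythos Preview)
Your proposal is correct and follows essentially the same argument as the paper: the reverse direction is immediate from $k\subseteq R$, and the forward direction combines a finite $R$-generating set of $\mathcal{B}$ with a finite $k$-generating set of $R$ to obtain a finite $k$-generating set of $\mathcal{B}$. Your extra observation that the hypothesis on $R$ is unnecessary for the reverse implication is a valid (and mildly useful) addendum.
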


\begin{proof} Let $\mathcal{B}$ be finitely generated over $k$. Then since $k \subset R$, $\mathcal{B}$ is automatically finitely generated over $R$. Now let $\mathcal{B}$ be finitely generated over $R$, say by elements $b_1, \ldots, b_n \in \mathcal{B}$. Then for any $b \in \mathcal{B}$, $b=\sum_{i=1}^q r_ib_i^{\alpha_i}$ with $r_i \in R$. But $R$ is finitely generated over $k$, say by elements $k_1, \ldots, k_m$, so $r_i=\sum_{j=1}^p a_{ij}k_j^{\beta_{ij}}$, with $a_{ij} \in k$. So $b=\sum_i^q (\sum_j^p a_{ij}k_j^{\beta_{ij}})b_i^{\alpha_i}$, and $\mathcal{B}$ is finitely generated as an algebra over $k$ by $\{b_1, \ldots, b_n, k_1, \ldots, k_m\}$.
\end{proof}

A few definitions are required before stating the main results of this section.

\begin{defn} Let $R=k[\boldsymbol{x}]=k[x_1,\ldots, x_n]$ be the polynomial ring over a field $k$ in $n$ variables. Let $F=\{f_1, \ldots, f_q\}$ be a finite set of distinct monomials in $R$ such that $f_i \neq 1$ for all $i$. The \emph{monomial subring spanned by $F$} is the $k$-subalgebra \[k[F]=k[f_1, \ldots, f_q] \subset R.\]\end{defn}

\begin{defn} For $c \in \mathbb{N}^n$, we set $\boldsymbol{x}^c=x_1^{c_1} \cdots x_n^{c_n}$. Let $f$ be a monomial in $R$. The exponent vector of $f=\boldsymbol{x}^\alpha$ is denoted by $\textrm{log}(f)=\alpha \in \mathbb{N}^n$. If $F$ is a collection of monomials in $R$, $\textrm{log}(F)$ denotes the set of exponent vectors of the monomials in $F$.\end{defn}

\begin{thm} If $R$ is a polynomial ring in $n$ variables over $k$, and $I$ and $J$ are ideals generated by monomials (i.e. monic products of variables) in $R$, then $\mathcal{B}$ is a semigroup ring.\end{thm}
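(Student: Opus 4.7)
The plan is to exhibit $\mathcal{B}$ directly as a semigroup ring $k[Q]$ for a concrete affine subsemigroup $Q \subseteq \N^{n+2}$, in which the first $n$ coordinates of an exponent vector record the exponents of $x_1, \ldots, x_n$ and the last two record the exponents of $u$ and $v$. The critical preparatory fact, which is the only place the hypothesis is used, is that since $I$ and $J$ are monomial ideals, so are all powers $I^r$ and $J^s$, and hence so is each intersection $I^r \cap J^s$. In particular the graded piece $(I^r \cap J^s)u^r v^s$ has a $k$-basis of pure monomials $\boldsymbol{x}^c u^r v^s$, and collecting these over all $(r,s) \in \N^2$ yields a monomial $k$-basis for $\mathcal{B}$ viewed as a subring of $R[u,v]$.

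Next I would define
\[
Q = \{(c,r,s) \in \N^n \times \N^2 : \boldsymbol{x}^c \in I^r \cap J^s\}
\]
and verify that $Q$ is a subsemigroup of $\N^{n+2}$ under componentwise addition. Closure is immediate from the containments $I^r \cdot I^{r'} \subseteq I^{r+r'}$ and $J^s \cdot J^{s'} \subseteq J^{s+s'}$: if $\boldsymbol{x}^c \in I^r \cap J^s$ and $\boldsymbol{x}^{c'} \in I^{r'} \cap J^{s'}$, then $\boldsymbol{x}^{c+c'} \in I^{r+r'} \cap J^{s+s'}$. The zero vector lies in $Q$ since $I^0 = J^0 = R$.

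Finally, I would construct the $k$-algebra map $\varphi : k[Q] \to \mathcal{B}$ sending the basis element $t^{(c,r,s)}$ to $\boldsymbol{x}^c u^r v^s$. This is well defined and multiplicative because the defining relation $t^\alpha t^\beta = t^{\alpha+\beta}$ of the semigroup ring matches the product of the corresponding monomials in $R[u,v]$. Bijectivity is exactly the content of the basis observation from the first paragraph: the set $\{\boldsymbol{x}^c u^r v^s : (c,r,s) \in Q\}$ is simultaneously the $k$-basis of $k[Q]$ and of $\mathcal{B}$. There is no genuine obstacle in this argument; the whole proof is a matter of unpacking definitions, and the only subtlety is the standard fact that intersections of monomial ideals remain monomial, without which $\mathcal{B}$ would not be monomially spanned and the identification with a semigroup ring would fail.
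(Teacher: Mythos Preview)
Your proof is correct and follows essentially the same route as the paper: both hinge on the fact that $I^r\cap J^s$ is a monomial ideal, hence $\mathcal{B}$ has a $k$-basis of monomials in $k[x_1,\ldots,x_n,u,v]$, and then identify $\mathcal{B}$ with $k[Q]$ for the semigroup $Q$ of their exponent vectors. Your version is a bit more explicit in describing $Q$ directly and verifying closure and the isomorphism, whereas the paper simply takes $Q$ to be generated by $\log$ of a monomial generating set, but the content is the same.
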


\begin{proof} Since $I$ and $J$ are monomial ideals, $I^r \cap J^s$ is as well for all $r$ and $s$. So each $(r,s)$ component of $\mathcal{B}$ is generated by monomials, therefore $\mathcal{B}$ is a subring of $k[x_1, \ldots, x_n, u,v]$ generated over $k$ by a list of monomials $\{b_i|i \in \Lambda\}$. Let  $Q$ be the semigroup generated by $\{\textrm{log}(b_i)|i \in \Lambda\}$. Then $\mathcal{B}=k[Q]$, and $\mathcal{B}$ is a semigroup ring over $k$.
 \end{proof}

 \begin{thm} Let $I=(x_1^{a_1}\cdots x_n^{a_n})$ and  $J=(x_1^{b_1}\cdots x_n^{b_n})$ be principal ideals in $R=k[x_1, \ldots, x_n]$, and let $\Sigma_{\mathbf{a},\mathbf{b}}$ be the fan associated to $\mathbf{a}=(a_1, \ldots, a_n)$ and $\mathbf{b}=(b_1, \ldots, b_n)$. Let 
 \[Q_i=C_i \cap \Z^2 \textrm{ for every } C_i \in \Sigma_{\mathbf{a},\mathbf{b}}\] 
 and $\mathcal{H}_{Q_i}$ be its Hilbert basis of cardinality $n_i$ for all $i=0, \ldots, n$.
 Further, let $Q$ be the subsemigroup in $\N^2$ generated by 
 \[\{(a_1r_{ij}, \ldots, a_ir_{ij},b_{i+1}s_{ij},\ldots,b_ns_{ij},r_{ij},s_{ij})| i=0, \ldots, n, j=1, \ldots, n_i\} \cup \log(x_1, \ldots, x_n),\]
 where $(r_{ij},s_{ij})\in \mathcal{H}_{Q_i}$ for every $i=0, \ldots n, j=1, \ldots, n_i$. Then $\mathcal{B}=k[Q]$.
 \end{thm}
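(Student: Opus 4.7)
The plan is to combine Theorem \ref{UFD} with the previous theorem that identifies $\mathcal{B}$ as a monomial $k$-subalgebra, and then convert the $R$-algebra generating set into a $k$-algebra generating set. Since we already have the identification $k[F]=k[Q]$ for any finite collection $F$ of monomials (with $Q$ the semigroup generated by $\log F$), the bulk of the work is already done; what remains is bookkeeping of exponent vectors.

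First, I would specialize Theorem \ref{UFD} to the case $R=k[x_1,\ldots,x_n]$ and $p_i=x_i$. This gives an explicit list of $R$-algebra generators for $\mathcal{B}$, namely the monomials
\[
g_{ij}=x_1^{a_1 r_{ij}}\cdots x_i^{a_i r_{ij}}x_{i+1}^{b_{i+1}s_{ij}}\cdots x_n^{b_n s_{ij}}u^{r_{ij}}v^{s_{ij}},
\]
indexed by $i=0,\ldots,n$ and $j=1,\ldots,n_i$, whose exponent vectors in $\N^{n+2}$ are precisely the tuples $(a_1 r_{ij},\ldots,a_i r_{ij},b_{i+1}s_{ij},\ldots,b_n s_{ij},r_{ij},s_{ij})$ appearing in the statement.

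Next I would promote these $R$-algebra generators to a $k$-algebra generating set. Since $R=k[x_1,\ldots,x_n]$ is generated as a $k$-algebra by $x_1,\ldots,x_n$, the set
\[
F=\{g_{ij}\mid i=0,\ldots,n,\ j=1,\ldots,n_i\}\cup\{x_1,\ldots,x_n\}
\]
is a $k$-algebra generating set for $\mathcal{B}$. Every element of $F$ is a monomial in $k[x_1,\ldots,x_n,u,v]$, and the exponent vectors of the elements of $F$ are exactly the generators of $Q$ listed in the theorem.

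Finally I would invoke the identification $k[F]=k[Q]$ (where $Q$ is the subsemigroup of $\N^{n+2}$ generated by $\log F$): since multiplication of monomials corresponds to addition of exponent vectors, and $\mathcal{B}=k[F]$ by the previous step, we conclude $\mathcal{B}=k[Q]$ with $Q$ the semigroup described in the theorem. There is no substantial obstacle; the only mild subtlety is that the statement writes $\N^2$ where the exponent vectors actually live in $\N^{n+2}$, which I would gloss over or silently correct, and one should note that even though $a_i=0$ or $b_i=0$ may occur, the exponent vectors still lie in $\N^{n+2}$, so the semigroup $Q$ is well-defined.
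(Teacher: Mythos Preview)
Your proposal is correct and follows essentially the same route as the paper: specialize Theorem~\ref{UFD} to $R=k[x_1,\ldots,x_n]$ with $p_i=x_i$, adjoin $x_1,\ldots,x_n$ to pass from an $R$-algebra generating set to a $k$-algebra generating set $P$ of monomials, and then observe that $\log(P)$ is exactly the listed generating set for $Q$, whence $\mathcal{B}=k[Q]$. Your remark about the $\N^2$ versus $\N^{n+2}$ typo is also apt.
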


 \begin{proof} 
 
Since $R$ is a UFD, by Thm \ref{UFD}, $\mathcal{B}$ is generated over $R$ by \[\{x_1^{a_1r_{ij}}\cdots x_i^{a_ir_{ij}}x_{i+1}^{b_{i+1}s_{ij}} \cdots x_n^{b_ns_{ij}}u^{r_{ij}}v^{s_{ij}}| i=0, \ldots, n, j=1, \ldots, n_i\}.\] Then, since $R$ is generated as an algebra over $k$ by $x_1, \ldots, x_n$, it follows that 
$\mathcal{B} \subset k[x_1, \ldots, x_n,u,v]$ is generated as an algebra over $k$ by the set \[P =\{x_1, \ldots, x_n, x_1^{a_1r_{ij}}\cdots x_i^{a_ir_{ij}}x_{i+1}^{b_{i+1}s_{ij}} \cdots x_n^{b_ns_{ij}}u^{r_{ij}}v^{s_{ij}}| i=0, \ldots, n, j=1, \ldots, n_i\}.\]
This is a set of monomials in $k[x_1, \ldots, x_n,u,v]$. Now note that therefore
\begin{align*}
\log(P) =&\{(a_1r_{ij}, \ldots, a_ir_{ij},b_{i+1}s_{ij},\ldots,b_ns_{ij},r_{ij},s_{ij})| i=0, \ldots, n, j=1, \ldots, n_i\} \\
& \qquad \cup \textrm{log}(x_1, \ldots, x_n).\\
\end{align*}
In conclusion, $\log(P)=Q$ and hence $\B=k[Q]$.
 
\end{proof}

\begin{ex} Let $I=(x^5y^2)$ and $J=(x^2y^3)$. Then $a_1=5, a_2=2$ and $b_1=2, b_2=3$, and $5/2 \geq 2/3$. Then we have the following cones:
\begin{align*}
C_0&=\{\lambda_1(0,1)+\lambda_2(2,5)|\lambda_i \in \mathbb{R}_{\geq 0}\}\\
C_1&=\{\lambda_1(2,5)+\lambda_2(3,2)|\lambda_i \in \mathbb{R}_{\geq 0}\}\\
C_2&=\{\lambda_1(3,2)+\lambda_2(1,0)|\lambda_i \in \mathbb{R}_{\geq 0}\}\\
\end{align*}

$C_0$ is the wedge of the first quadrant between the $y$-axis and the vector $(2,5)$, $C_1$ is the wedge between $(2,5)$ and $(3,2)$, and $C_3$ is the wedge between $(3,2)$ and $(1,0)$. It is easy to see that this fan fills the entire first quadrant. Intersecting these cones with $\mathbb{Z}^2$ is equivalent to only considering the integer lattice points in these cones.\\

The Hilbert Basis of $Q_0=C_0 \cap \mathbb{Z}^2$ is $\{(0,1),(1,3),(2,5)\}$, and their corresponding monomials in $\mathcal{B}$ are given by the generators of $\mathcal{B}_{r,s}$ for each $(r,s)$:

\begin{align*}
(0,1)&: (I^0 \cap J^1)u=(x^2y^3)v - \textrm{generator is }x^2y^3v\\
(1,3)&: (I^1 \cap J^3)uv^3=((x^{5}y^{2}) \cap (x^6y^9))uv^3=(x^{6}y^{9})uv^3 - \textrm{generator is }x^{6}y^{9}uv^3\\
(2,5)&: (I^2 \cap J^5)u^2v^5=((x^{10}y^{4}) \cap (x^{10}y^{15}))u^2v^5=(x^{10}y^{15})u^2v^5 - \textrm{generator is }x^{10}y^{15}u^2v^5\\
\end{align*}
Notice that all the generator monomials are of the form $x^{b_1s}y^{b_2s}u^rv^s$, with $b_1=2, b_2=3$, and $(r,s)$ is a Hilbert Basis element, as shown earlier.

The Hilbert Basis of $Q_1$ is $\{(1,1),(1,2),(3,2),(2,5)\}$. In the same way as above, their monomials are $x^5y^3uv, x^5y^{6}uv^2, x^{15}y^{6}u^3v^2, x^{10}y^{15}u^2v^5$, all of which have the form $x^{a_1r}y^{b_2s}u^rv^s$ with $a_1=5, b_2=3$ and $(r,s)$ a basis element.\\

Lastly, the Hilbert Basis of $Q_2$ is $\{(1,0), (2,1), (3,2)\}$, which gives rise to generators $x^5y^2u, x^{10}y^4u^2, x^{15}y^{6}u^3v^2$, all of which look like $x^{a_1r}y^{a_2r}u^rv^s$ with $a_1=5, a_2=2$.\\

Notice there are a few redundant generators in this list: those arise from lattice points that lie on the boundaries of the cones. So $\mathcal{B}$ is generated over $R$ by \[\{x^5y^2u, x^{10}y^4u^2, x^{15}y^{6}u^3v^2, x^5y^3uv, x^5y^{6}uv^2, x^2y^3v, x^{6}y^{9}uv^3, x^{10}y^{15}u^2v^5\}.\]\end{ex}

Using this technique, we have written a program in Macaulay2 that will provide the list of generators of $\mathcal{B}$ for any $I$ and $J$. First it fan orders the exponent vectors, then finds the Hilbert Basis for each cone that arises from those vectors. Finally, it computes the corresponding monomial for each basis element. The code is below:

\begin{verbatim}
loadPackage "Polyhedra"
--function to get a list of exponent vectors from an ideal I
expList=(I) ->(
     flatten exponents first flatten entries gens I
)

algGens=(I,J)->(
     B:=(expList(J))_(positions(expList(J),i->i!=0));
     A:=(expList(I))_(positions(expList(J),i->i!=0));
     L:=sort apply(A,B,(i,j)->i/j);   
     C:=flatten {0,apply(L,i->numerator i),1};
     D:=flatten {1, apply(L,i->denominator i),0};
     M:=matrix{C,D};
     G:=unique flatten apply (#C-1, i-> hilbertBasis 
     		(posHull submatrix(M,{i,i+1})));
     S:=ring I[u,v];
     flatten apply(#G,i->((first flatten entries gens 
     		intersect(I^(G#i_(1,0)),J^(G#i_(0,0)))))*u^(G#i_(1,0))*v^(G#i_(0,0)))
)

\end{verbatim}

\section{Fan Algebras}
The intersection algebra is in fact a specific case of a more general class of algebras that can be naturally associated to a fan of cones. We will call such objects fan algebras, and the first result in this section shows that they are finitely generated. First a definition:

\begin{defn}Given a fan of cones $\Sigma_{\mathbf{a}, \mathbf{b}}$, a function $f:\N^2 \rightarrow \N$ is called \emph{fan-linear} if it is nonnegative and linear on each subgroup $Q_i=C_i \cap \Z^2$ for each $C_i \in \Sigma_{\mathbf{a}, \mathbf{b}}$, and subadditive on all of $\N^2$, i.e.
\[f(r,s)+f(r',s') \geq f(r+r', s+s') \textrm{ for all }(r,s), (r',s') \in \N^2.\]
In other words, $f(r,s)$ is a piecewise linear function where 
\[f(r,s) = g_{i}(r,s)  \textrm{ when } (r,s) \in C_i\cap \N^2 \textrm{ for each } i=0, \ldots n, \textrm{ and each }g_i\textrm{ is linear on }C_i\cap \N^2.\] 
Note that each piece of $f$ agrees on the faces of the cones, that is $g_i=g_j$ for every $(r,s) \in C_i \cap C_j\cap \N^2$.
\end{defn}

\begin{ex} Let $\mathbf{a}=\{1\}=\mathbf{b}$, so $\Sigma_{\mathbf{a}, \mathbf{b}}$ is the fan defined by 
\begin{align*}C_0&=\{\lambda_1(0,1)+\lambda_2(1,1)|\lambda_i \in \mathbb{R}_{\geq 0}\}\\
C_1&=\{\lambda_1(1,1)+\lambda_2(1,0)|\lambda_i \in \mathbb{R}_{\geq 0}\},\end{align*}
and set $Q_i=C_i \cap \Z^2$.
Also let \[f=\left\{ \begin{array}{ll}
g_0(r,s)=r+2s & \textrm{if $(r,s) \in Q_0$}\\ g_1(r,s)=2r+s & \textrm{if $(r,s) \in Q_1$}\end{array} \right.\]
Then $f$ is a fan-linear function. It is clearly nonnegative and linear on both $Q_0$ and $Q_1$. The function is also subadditive on all of $\N^2$: Let $(r,s) \in Q_0$ and $(r',s') \in Q_1$, and say that $(r+r', s+s') \in Q_0$. Then 
\begin{align*}f(r,s)+f(r',s')&=g_0(r,s)+g_1(r',s')=r+2s+2r'+s\\
f(r+r',s+s')&=g_0(r+r',s+s')=r+r'+2(s+s')\\
\end{align*}
Comparing the two, we see that \[f(r,s)+f(r',s')\geq f(r+r',s+s') \textrm{ whenever }r+2s+2r'+s' \geq r+r'+2(s+s'),\] or equivalently when $r' \geq s'$. But that is true, since $(r', s') \in Q_1$. The proof for $(r+r', s+s') \in Q_1$ is similar. The two pieces of $f$ also agree on the boundary between $Q_0$ and $Q_1$, since the intersection of $Q_0$ and $Q_1$ is the ray in $\N^2$ where $r=s$, and \[g_0(r,r)=3r=g_1(r,r).\] So $f$ is a fan-linear function.
\end{ex}

\begin{thm}\label{gen}Let $I_1 \ldots, I_n$ be ideals in a domain $R$ and $\Sigma_{\mathbf{a}, \mathbf{b}}$ be a fan of cones in $\N^2$. Let $f_1, \ldots, f_n$ be fan-linear functions. Then the algebra
\[\B=\bigoplus_{r,s}I_1^{f_1(r,s)}\cdots I_n^{f_n(r,s)}u^rv^s\] is finitely generated.\end{thm}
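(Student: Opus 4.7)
The plan is to generalize the proof of Theorem~\ref{UFD} by replacing ``powers of a principal ideal generator'' with ``products of powers of arbitrary ideals'' and using the hypotheses on the $f_k$ to transport the Hilbert basis argument. Before starting, I would observe that the two clauses in fan-linearity play distinct roles: subadditivity on all of $\N^2$ ensures that $\B$ is closed under multiplication (so calling it an algebra is legitimate), while linearity on each $Q_i$ is the property that produces a finite generating set.

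For each cone $C_i \in \Sigma_{\mathbf{a}, \mathbf{b}}$, let $\mathcal{H}_{Q_i} = \{(r_{ij}, s_{ij})\}_{j=1}^{n_i}$ be the Hilbert basis of $Q_i = C_i \cap \Z^2$. Define
\[J_{ij} = I_1^{f_1(r_{ij}, s_{ij})} \cdots I_n^{f_n(r_{ij}, s_{ij})},\]
which is a finite product of powers of finitely generated ideals in a Noetherian ring, hence itself finitely generated; fix a finite generating set $G_{ij}$. The candidate generating set for $\B$ over $R$ is then
\[\mathcal{G} = \bigcup_{i, j} \{g\, u^{r_{ij}} v^{s_{ij}} : g \in G_{ij}\},\]
which is finite because the fan has finitely many cones, each with a finite Hilbert basis contributing finitely many ideal generators.

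To verify that $\mathcal{G}$ generates $\B$, it suffices by the $\N^2$-grading to consider a homogeneous element $y\, u^r v^s \in \B$. Since $\Sigma_{\mathbf{a}, \mathbf{b}}$ partitions the first quadrant, $(r,s) \in Q_i$ for some $i$, and it decomposes as $(r,s) = \sum_{j=1}^{n_i} m_j (r_{ij}, s_{ij})$ with $m_j \in \N$. By linearity of each $f_k$ on $Q_i$,
\[f_k(r,s) = \sum_j m_j f_k(r_{ij}, s_{ij}) \quad \text{for every } k,\]
so
\[I_1^{f_1(r,s)} \cdots I_n^{f_n(r,s)} \;=\; \prod_j \bigl(I_1^{f_1(r_{ij}, s_{ij})} \cdots I_n^{f_n(r_{ij}, s_{ij})}\bigr)^{m_j} \;=\; \prod_j J_{ij}^{m_j}.\]
Therefore $y$ is an $R$-linear combination of products involving exactly $m_j$ factors from $G_{ij}$ for each $j$, and pairing each such product with $u^r v^s = \prod_j (u^{r_{ij}} v^{s_{ij}})^{m_j}$ writes $y\, u^r v^s$ as an $R$-polynomial in $\mathcal{G}$.

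The heart of the argument---and the step where fan-linearity is used essentially---is the factorization $\prod_k I_k^{f_k(r,s)} = \prod_j J_{ij}^{m_j}$. This succeeds precisely because $(r,s)$ and all of the $(r_{ij}, s_{ij})$ lie in the same cone $Q_i$, so each $f_k$ distributes linearly over the decomposition of $(r,s)$. If one tried to express $(r,s)$ using Hilbert basis vectors drawn from different cones, the $f_k$ would in general only satisfy subadditivity and the containment would go the wrong way; so the cone-by-cone Hilbert basis is exactly the right packaging. Apart from this, the proof is essentially bookkeeping, parallel to the UFD case.
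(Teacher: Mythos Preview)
Your proof is correct and follows essentially the same route as the paper: reduce to a homogeneous piece, locate $(r,s)$ in some $Q_i$, decompose it via the Hilbert basis, and use linearity of each $f_k$ on $Q_i$ to factor the graded component into a product indexed by Hilbert basis elements. The only cosmetic difference is that you take generators of the product ideal $J_{ij}$ directly, whereas the paper fixes generators $\Lambda_{i,j,k}$ of each factor $I_k^{f_k(r_{ij},s_{ij})}$ and forms products $x_1\cdots x_n$; these yield the same generating set.
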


\begin{proof} First notice that the subadditivity of the functions $f_i$ guarantees that $\B$ is a subalgebra of $R[u,v]$ with the natural grading. Since $\mathcal{B}$ has a natural $\mathbb{N}^2$-grading, it is enough to consider only homogeneous monomials $b \in \mathcal{B}$ with $\textrm{deg}(b)=(r,s)$. Then $(r,s) \in Q_i=C_i \cap \Z^2$ for some $C_i \in \Sigma_{\mathbf{a},\mathbf{b}}$. Since $Q_i$ is a pointed rational cone, it has a Hilbert basis
\[H_{Q_i}=\{(r_{i1},s_{i1}), \ldots, (r_{in_i},s_{in_i})\}.\]
So we can write \[(r,s)=\sum_{j=1}^{n_i}m_j(r_{ij},s_{ij}).\] Then, since each $f_k$ is nonnegative and linear on $Q_i$, we have
\[f_k(r,s)=\sum_{j=1}^{n_i}m_jf_k(r_{ij},s_{ij})\]
 for each $k=1, \ldots, n$. Since $R$ is Noetherian, for each $i$, there exists a finite set $\Lambda_{i,j,k} \subset R$ such that \[I_k^{f_k(r_{ij},s_{ij})}=(x_k|x_k \in \Lambda_{i,j,k}).\] So
\begin{align*}
b \in \B_{r,s}&=I_1^{f_1(r,s)}\cdots I_n^{f_n(r,s)}u^rv^s\\
&=I_1^{\sum_{j=1}^{n_i}m_jf_1(r_{ij},s_{ij})} \cdots I_n^{\sum_{j=1}^{n_i}m_jf_n(r_{ij},s_{ij})}u^{\sum_{j=1}^{n_i}m_jr_{ij}}v^{\sum_{j=1}^{n_i}m_js_{ij}}\\
&=I_1^{m_1f_1(r_{i1},s_{i1})}\cdots I_1^{m_{n_i}f_1(r_{in_i},s_{in_i})} \cdots I_n^{m_1f_n(r_{i1},s_{i1})}\cdots I_n^{m_{n_i}f_n(r_{in_i},s_{in_i})}\\
&\qquad \qquad u^{m_1r_{i1}}\cdots u^{m_{n_i}r_{in_i}}v^{m_1s_{i1}}\cdots v^{m_{n_i}s_{in_i}}\\
&=\left(I_1^{f_1(r_{i1},s_{i1})}\cdots I_n^{f_n(r_{i1},s_{i1})}u^{r_{i1}}v^{s_{i1}}\right)^{m_1}\cdots\left(I_1^{f_1(r_{in_i},s_{in_i})}\cdots I_n^{f_n(r_{in_i},s_{in_i})}u^{r_{in_i}}v^{s_{in_i}}\right)^{m_{n_i}}.
\end{align*}
So $\B$ is generated as an algebra over $R$ by the set
\[\{x_1 \cdots x_nu^{r_{ij}}v^{s_{ij}}|(r_{ij},s_{ij}) \in \mathcal{H}_{Q_i}, x_k \in \Lambda_{i,j,k}\}.\]
\end{proof}

This result justifies the following definition.

\begin{defn} Given ideals $I_1, \ldots, I_n$ in a domain $R$, $\Sigma_{\mathbf{a}, \mathbf{b}}$ a fan of cones in $\N^2$, and $f_1, \ldots, f_n$ are fan-linear functions, we define
 \[\B(\Sigma_{\mathbf{a},\mathbf{b}},f)=\bigoplus_{r,s}I_1^{f_1(r,s)}\cdots I_n^{f_n(r,s)}u^rv^s\]
 to be the \emph{fan algebra of $f$ on $\Sigma_{\mathbf{a},\mathbf{b}}$}, where $f=(f_1, \ldots, f_n)$.
\end{defn}

\begin{remark}The intersection algebra of two principal ideals $I=(p_1^{a_1}\cdots p_n^{a_n})$ and $J=(p_1^{b_1}\cdots p_n^{b_n})$ in a UFD is a special case of a fan algebra. Let $I_i=(p_i)$ and $f_i=\max(ra_i,sb_i)$ for each $i=1, \ldots, n$, and define the fan $\Sigma_{\mathbf{a},\mathbf{b}}$ to be the fan associated to $\mathbf{a}=(a_1, \ldots, a_n)$ and $\mathbf{b}=(b_1, \ldots, b_n)$. Then 
\[\B(I,J)=\bigoplus_{r,s}(p_1)^{\max(ra_1,sb_1)}\cdots (p_n)^{\max(ra_n,sb_n)}u^rv^s.\]
This is a fan algebra since the max function is fan-linear: it's subadditive on all of $\N^2$, and linear and nonnegative on each cone, since the faces of each cone in $\Sigma_{\mathbf{a},\mathbf{b}}$ are defined by lines through the origin with slopes $a_i/b_i$ for each $i=0, \ldots, n$. So, as in the proof of Theorem \ref{UFD}, for any pair $(r,s) \in Q_i=C_i\cap \Z^2$ for every $C_i \in \Sigma_{\mathbf{a},\mathbf{b}}$, we have that
 \[\frac{a_i}{b_i} \geq \frac{s}{r} \geq \frac{a_{i+1}}{b_{i+1}}.\] 
 So $a_ir \geq b_is  $, and by the ordering on the $a_i$ and the $b_i$, $ a_jr\geq b_js$ for all $j<i$. Also, $a_{i+1}r\leq b_{i+1}s$, and again by the ordering, $a_jr\leq b_js$ for all $j>i$. Since $f_k=\max(ra_k,sb_k)$ for all $k=1, \ldots, n$, we have that
 \[f_k=ra_k \textrm{ for all } k \leq i \textrm{ and } f_k=sb_k \textrm{ for all } k > i.\]
 So each $f_k$ is linear on each cone, and the above theorem applies.  \\
\end{remark}

It is important to note that the intersection algebra is not always Noetherian. One such example, given in \cite{Fieldsthesis}, is constructed by taking an ideal $P$ in $R$ such that the algebra \[R \oplus P^{(1)} \oplus P^{(2)} \oplus \cdots\] is not finitely generated. Then, it is shown that there exists an $f \in R$ such that $(P^a:f^a)=P^{(a)}$ for all $a$. It follows that the intersection algebra of $P$ and $(f)$ is not finitely generated.

One important question that should be considered is what conditions on $f$ and $P$ are necessary to ensure that the intersection algebra of $f$ and $P$ is Noetherian.\\

\begin{prop}Let $R$ be a standard $\N^2$-graded ring with maximal ideal $\mathfrak{m}$ and $f$ a homogeneous element in $ \mathfrak{m}$. Then \[\B=\B_R(f,\mathfrak{m})=\bigoplus_{r,s \in \mathbb{N}^2}(f)^r\cap \mathfrak{m}^su^rv^s\] is finitely generated as an $R$-algebra.
\end{prop}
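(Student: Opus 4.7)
The plan is to reduce the proposition to Theorem~\ref{gen} by exhibiting $\mathcal{B}$ as a fan algebra. Let $d \geq 1$ denote the total degree of the homogeneous element $f \in \mathfrak{m}$. The first step is to establish the identity
\[
(f)^r \cap \mathfrak{m}^s \;=\; (f)^r \, \mathfrak{m}^{\max(0,\, s - rd)}
\]
for every $r, s \in \N$. The inclusion $\supseteq$ is immediate: $(f)^r \subseteq \mathfrak{m}^{rd}$ gives $(f)^r \mathfrak{m}^{s-rd} \subseteq \mathfrak{m}^s$ when $s \geq rd$, while when $s \leq rd$ the right-hand side is $(f)^r \subseteq \mathfrak{m}^{rd} \subseteq \mathfrak{m}^s$. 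For the reverse inclusion, take $x = f^r g \in \mathfrak{m}^s$ and decompose $g = g' + g''$ into homogeneous pieces supported in total degrees $< s - rd$ and $\geq s - rd$ respectively (if $s \leq rd$, set $g' = 0$). Then $f^r g'' \in f^r \mathfrak{m}^{s - rd} \subseteq \mathfrak{m}^s$, while $f^r g' = x - f^r g''$ lies in $\mathfrak{m}^s$ and is supported in total degrees strictly less than $s$; since $\mathfrak{m}^s$ coincides with the span of the homogeneous elements of total degree at least $s$ in a standard graded ring, this forces $f^r g' = 0$. Hence $x = f^r g'' \in f^r \mathfrak{m}^{s - rd}$.

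Second, take $\mathbf{a} = (d)$ and $\mathbf{b} = (1)$, so that the associated fan $\Sigma_{\mathbf{a}, \mathbf{b}}$ has the two cones $C_0 = \R_{\geq 0}(0, 1) + \R_{\geq 0}(1, d)$ and $C_1 = \R_{\geq 0}(1, d) + \R_{\geq 0}(1, 0)$. With $I_1 = (f)$, $I_2 = \mathfrak{m}$, $g_1(r, s) = r$, and $g_2(r, s) = \max(0, s - rd)$, the identity above rewrites $\mathcal{B}$ as
\[
\mathcal{B} \;=\; \bigoplus_{r, s \in \N} I_1^{g_1(r, s)} \, I_2^{g_2(r, s)} \, u^r v^s,
\]
so $\mathcal{B} = \mathcal{B}(\Sigma_{\mathbf{a}, \mathbf{b}}, (g_1, g_2))$ as soon as the $g_i$ are fan-linear.

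The third step is the verification of fan-linearity. The function $g_1$ is globally linear and nonnegative. For $g_2$: on $C_1 \cap \N^2$ we have $s \leq rd$, so $g_2 \equiv 0$; on $C_0 \cap \N^2$ we have $s \geq rd$, so $g_2(r, s) = s - rd$. These two linear pieces agree along the common face $\{s = rd\}$, and subadditivity on all of $\N^2$ follows from the elementary inequality $\max(0, x) + \max(0, y) \geq \max(0, x + y)$. Applying Theorem~\ref{gen} then yields finite generation of $\mathcal{B}$ as an $R$-algebra; the domain hypothesis stated there is not in fact used in its proof, which needs only Noetherianity, and our standard graded ring $R$ is Noetherian.

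The main obstacle I anticipate is the first step, namely the identity $(f)^r \cap \mathfrak{m}^s = (f)^r \mathfrak{m}^{\max(0, s - rd)}$; once this is in hand the fan algebra machinery handles the rest directly.
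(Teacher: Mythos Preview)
Your argument is correct and follows essentially the same route as the paper: both establish the key identity $(f)^r\cap\mathfrak{m}^s=f^r\mathfrak{m}^{\max(0,s-rd)}$ (with $d=\deg f$), build the fan on the two cones separated by the line $s=rd$, verify that $\max(0,s-rd)$ is fan-linear there, and invoke Theorem~\ref{gen}. The only difference is cosmetic packaging: the paper substitutes $w=fu$ to pass to an auxiliary algebra $\tilde{\mathcal B}=\sum\mathfrak{m}^{s-ra}w^rv^s$ involving the single ideal $\mathfrak{m}$, then pushes finite generation forward along the surjection $w\mapsto fu$; you instead keep both ideals $I_1=(f)$ and $I_2=\mathfrak{m}$ with exponent functions $g_1(r,s)=r$ and $g_2(r,s)=\max(0,s-rd)$ and apply Theorem~\ref{gen} directly to $\mathcal B$ itself. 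Your version is marginally more direct, and your remark that the domain hypothesis in Theorem~\ref{gen} is not actually used (only Noetherianity is) is accurate and worth keeping, since the proposition does not assume $R$ is a domain.
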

\begin{proof}Say $\deg f=a$, and let $x \in (f)^r \cap \mathfrak{m}^s$. Then $x=f^r \cdot y \in \mathfrak{m}^s$, so $y \in (\mathfrak{m}^s:f^r)$. Decompose $y$ into its homogeneous pieces, $y=y_0+ \cdots+ y_m$. Then $f^r(y_0+\cdots+y_m) \in \mathfrak{m}^s$, so \[ra+\deg y_i \geq s \textrm{ for all }i=0, \ldots, n,\] or, equivalently $\deg y_i \geq s-ra$. Therefore $y \in \mathfrak{m}^{s-ra}$, and so \[(f)^r \cap (\mathfrak{m}^s)=f^r \cdot \mathfrak{m}^{s-ra}.\]

Then
 \[\B=\sum_{r,s \in \N^2} f^r \cdot \mathfrak{m}^{s-ra}u^rv^s=\sum_{r,s \in \N^2}\mathfrak{m}^{s-ra}(fu)^rv^s.\]
Let 
\[\tilde{\mathcal{B}}=\sum_{r,s \in \N^2}\mathfrak{m}^{s-ra}w^rv^s,\]
and $\varphi: \tilde{\B} \rightarrow \B$ be the map that sends $w$ to $fu$ and is the identity on $R$ and $v$. This map is obviously surjective, therefore, if $\tilde{\mathcal{B}}$ is finitely generated, so is $\mathcal{B}$. But $\tilde{\B}$ is a fan algebra with $I_1=\mathfrak{m}$ and
 \[f_1(r,s) = \left\{ \begin{array}{ll}
s-ra & \textrm{if $s/r \geq a$}\\ 
 0 & \textrm{if $s/r<a$}\\ \end{array} \right., \] which is certainly fan-linear on the fan formed by two cones \[C_0=\{\lambda_1(0,1)+\lambda_2(1,a))|\lambda_i \in \R_{\geq 0}\}\textrm{ and } C_1=\{\lambda_1(1,a)+\lambda_2(1,0)|\lambda_i \in \R_{\geq 0}\},\]
 since $C_0$ contains the collection of all $(r,s) \in \N^2$ where $s/r \geq a$ and $C_1$ contains all $(r,s) \in \N$ where $s/r <a$.

So by Theorem \ref{gen}, if we define $\Lambda_{ij} \subset R$ to be a finite subset where $\mathfrak{m}^{s_{ij}-r_{ij}a}=(x|x \in \Lambda_{ij})$, then $\tilde{\B}$ is generated over $R$ by the set
\[\{xu^{r_{0j}}v^{s_{0j}}|(r_{0j},s_{0j}) \in \mathcal{H}_{Q_0}, x\in \Lambda_{ij}\}\cup \{u^{r_{1j}}v^{s_{1j}}|(r_{1j},s_{1j}) \in \mathcal{H}_{Q_1}\} ,\]
and therefore $\B$ is finitely generated as an $R$-algebra.
\end{proof}

\begin{remark}We are grateful to Mel Hochster, who noticed that the above proof works the same in the case where $R$ is regular local by replacing the degree of $f$ with its order. When $(R,\mathfrak{m})$ is a regular local ring, the order defines a valuation on $R$ because $gr_{\mathfrak{m}}(R)$ is a polynomial ring over $R/\mathfrak{m}$ (and hence a domain). Therefore, when $(R,\mathfrak{m})$ is a regular local ring and $f \in R$, $B(f, \mathfrak{m})$ is a finitely generated $R$-algebra.
\end{remark}

\subsection*{Acknowledgements} The author thanks her advisor, Florian Enescu, for many fruitful discussions, as well as Yongwei Yao for his useful suggestions.
\bibliographystyle{plain}
\bibliography{bibliography}

\end{document}